\documentclass[a4paper,12pt]{article}
\usepackage{amsmath, amscd, amsfonts, amsthm, amssymb, latexsym, comment, stmaryrd, graphicx}
\usepackage[arrow, matrix, curve]{xy}
\usepackage{hyperref}
\usepackage{amsmath}
\usepackage{geometry}

\newtheorem{leer}{}[section]
\newtheorem{thm}[leer]{Theorem}

\newtheorem{prop}[leer]{Proposition}
\newtheorem{lemm}[leer]{Lemma}
\newtheorem{coro}[leer]{Corollary}

\theoremstyle{remark}
\newtheorem{rema}[leer]{Remark}

\newcommand{\Spec}{\mathrm{Spec}}
\newcommand{\Gal}{\mathrm{Gal}}

\def\Aa{{\mathbb{A}}}

\begin{document}
\parindent0em
\parskip1em

\title{On varieties of Hilbert type}
\author{Lior Bary-Soroker, Arno Fehm and Sebastian Petersen}
\maketitle

\begin{abstract}
A variety $X$ over a field $K$ is of Hilbert type if $X(K)$ is not thin.
We prove that if $f\colon X\rightarrow S$ is a dominant morphism of $K$-varieties and both $S$ and all fibers $f^{-1}(s)$, $s\in S(K)$, are of Hilbert type, then so is $X$.
We apply this to answer a question of Serre on products of varieties and to generalize a result of Colliot-Th\'el\`ene and Sansuc on algebraic groups.
\end{abstract}

\section{Introduction}
In the terminology of thin sets (we recall this notion in Section~\ref{sec:notation}), Hilbert's irreducibility theorem asserts that $\mathbb{A}_K^n(K)$ is not thin,
for any number field $K$ and any $n\geq 1$. 
As a natural generalization a $K$-variety $X$ is called of Hilbert type if $X(K)$ is not thin. The importance of this definition stems from the observation of Colliot-Th\'el\`ene and Sansuc \cite{CTS} that the inverse Galois problem would be settled if every unirational variety over $\mathbb{Q}$ was of Hilbert type.

In this direction, Colliot-Th\'el\`ene and Sansuc \cite[Corollary 7.15]{CTS} prove that any connected reductive algebraic group over a number field is of Hilbert type. This immediately raises the question whether the same holds for all linear algebraic groups (note that these are unirational).
Another question, asked by Serre \cite[p.~21]{Serre}, is whether a product of two varieties of Hilbert type is again of Hilbert type.
The main result of this paper gives a sufficient condition for a variety to be of Hilbert type:

\begin{thm}\label{mainthm}
Let $K$ be a field and $f\colon X\to S$ a dominant morphism of $K$-varieties.
Assume that the set of $s\in S(K)$ for which the fiber $f^{-1}(s)$ is a $K$-variety of Hilbert type is not thin. 
Then $X$ is of Hilbert type.
\end{thm}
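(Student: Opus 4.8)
The plan is to deduce the theorem from a specialization principle for thin sets: \emph{if $A\subseteq X(K)$ is thin and $f\colon X\to S$ is dominant, then $\{s\in S(K) : f^{-1}(s)\text{ is integral and }A\cap f^{-1}(s)(K)\text{ is not thin in }f^{-1}(s)\}$ is thin in $S(K)$.} Granting this with $A=X(K)$, so that $A\cap f^{-1}(s)(K)=f^{-1}(s)(K)$, the displayed set equals the hypothesised non-thin set $\{s:f^{-1}(s)\text{ of Hilbert type}\}$; hence $X(K)$ cannot be thin. Before proving the principle I would make the harmless reductions to the case that $X$ and $S$ are normal and (geometrically) integral: a variety is of Hilbert type if and only if its normalization is — they agree over a dense open, and the $K$-points lying over the complementary proper closed set are thin — and a finite birational base change $S'\to S$ affects neither the fibres over $s\in S(K)$ nor the thinness of the relevant sets.

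\textbf{Reducing to covers of the fibres.} Using the standard normal form for thin sets (see Section~\ref{sec:notation}), write $A\subseteq Z(K)\cup\bigcup_{i=1}^{n}\pi_i(Y_i(K))$ with $Z\subsetneq X$ closed and each $\pi_i\colon Y_i\to X$ finite of degree $d_i\geq 2$ with $Y_i$ normal integral. Base changing to $f^{-1}(s)$ gives $A\cap f^{-1}(s)(K)\subseteq (Z\cap f^{-1}(s))(K)\cup\bigcup_i\pi_{i,s}(Y_{i,s}(K))$, where $Y_{i,s}=Y_i\times_X f^{-1}(s)$ and $\pi_{i,s}$ is the finite base change. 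So it suffices to show that, outside a thin set of $s$ with $f^{-1}(s)$ integral, $Z\cap f^{-1}(s)\subsetneq f^{-1}(s)$ and each $\pi_{i,s}$ has no rational section: the first set then lies in a lower-dimensional subvariety, hence is thin, and a finite morphism onto an integral variety with no rational section has thin image, since every component of the source is either non-dominant (type $\mathrm{C}_1$) or dominant, and then of degree $\geq 2$ (type $\mathrm{C}_2$). That $Z\cap f^{-1}(s)$ is proper for $s$ outside a thin set is spreading-out: $f(X\setminus Z)$ is constructible and contains the generic point of $S$, hence a dense open $V$, and for $s\in V$ the generic point of $f^{-1}(s)$ avoids $Z$.

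\textbf{The key lemma.} It remains to show, for a fixed finite $\pi\colon Y\to X$ of degree $d\geq 2$ with $Y$ normal integral, that $T:=\{s\in S(K):f^{-1}(s)\text{ integral},\ \pi_s\text{ has a rational section}\}$ is thin. When $f^{-1}(s)$ is integral, a rational section of $\pi_s$ is the same datum as an irreducible component of $(f\circ\pi)^{-1}(s)=Y\times_X f^{-1}(s)$ mapping birationally onto $f^{-1}(s)$; and off the non-flat locus of $\pi$ — a proper closed subset of $X$, contributing only a thin set of $s$ by spreading out — the fibre of $\pi$ over the generic point of $f^{-1}(s)$ has length $d$, so if moreover $(f\circ\pi)^{-1}(s)$ is integral then $\pi_s$ has no such component. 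Thus $T$ is thin once we know: \emph{for a dominant morphism $g\colon W\to S$ of normal integral varieties, $\{s\in S(K):g^{-1}(s)\text{ is not integral}\}$ is thin.} For this, let $L$ be the relative algebraic closure of $\kappa(S)$ in $\kappa(W)$ (a finite extension) and $\tilde S\to S$ the normalization of $S$ in $L$; then $W\to\tilde S$ has geometrically integral generic fibre, hence geometrically integral fibres over a dense open $\tilde V\subseteq\tilde S$ (EGA IV, 9.7.7). For $s\in S(K)$, $g^{-1}(s)=W\times_{\tilde S}\tilde S_s$ is the disjoint union of the fibres of $W\to\tilde S$ over the points of $\tilde S_s:=\tilde S\times_S\mathrm{Spec}\,K$, possibly thickened by nilpotents of $\tilde S_s$; so $g^{-1}(s)$ can fail to be integral only if $\tilde S_s$ is non-reduced, or $\tilde S_s$ has at least two points, or $\tilde S_s$ meets $\tilde S\setminus\tilde V$, or $\tilde S_s$ meets the non-dense locus of $\tilde S$ over which $W\to\tilde S$ has empty fibre. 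The last two conditions place $s$ under a proper closed subset of $\tilde S$, hence in a proper closed subset of $S$; non-reducedness places $s$ under the branch locus of $\tilde S\to S$, again proper closed when $L/\kappa(S)$ is separable (the purely inseparable part is handled by a separate elementary argument, using that radicial covers of degree $\geq p$ yield thin sets); and $\{s:\#\tilde S_s\geq 2\}$ is thin because, passing to the Galois closure $\hat S\to S$ with group $\Gamma$, it is the union over the finitely many set-stabilizers $\Gamma'\subsetneq\Gamma$ of proper nonempty subsets of $\Gamma/\mathrm{Gal}(\hat S/\tilde S)$ of the type-$\mathrm{C}_2$ thin sets $\{s:(\hat S/\Gamma')\times_S\mathrm{Spec}\,K\text{ has a }K\text{-point}\}$.

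\textbf{Main obstacle.} The crux is this last lemma: one cannot argue with $g$ directly, because its generic fibre need not be geometrically integral, so one is forced to pass to $\tilde S$ and then to control the splitting, ramification, and inseparability of the finite cover $\tilde S\to S$, which is trivial over the generic point of $S$ but not globally. Everything else is bookkeeping with constructible sets and with the normal form for thin sets.
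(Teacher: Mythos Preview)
Your argument and the paper's run on the same engine: for each cover $p_i\colon Y_i\to X$, Stein-factor the composite $f\circ p_i\colon Y_i\to S$ as $Y_i\to T_i\to S$ with $T_i\to S$ finite and the fibres of $Y_i\to T_i$ geometrically irreducible (your $\tilde S$ is this $T_i$). The paper, citing Koll\'ar, arranges $r_i\colon T_i\to S$ to be finite \'etale after shrinking and then argues directly: set $J=\{i:\deg r_i\ge 2\}$, choose $s\in\Sigma$ outside the thin set $\bigcup_{i\in J}r_i(T_i(K))$, so that $Y_{i,s}(K)=\emptyset$ for $i\in J$; for $i\notin J$ the fibre $Y_{i,s}$ is a $K$-variety \'etale over $X_s$ of degree $\ge 2$, and the Hilbert property of $X_s$ produces the desired point. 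You recast the same dichotomy as a specialization principle for thin sets, which is an appealing and slightly more general formulation, but the indirection makes several steps rougher than in the direct argument.

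In particular, your claim ``finite with no rational section $\Rightarrow$ thin image'' silently assumes that the dominant components of $Y_{i,s}$ are $K$-varieties mapping separably; once $p_i$ is shrunk to be \'etale the separability is fine, but geometric irreducibility of components does not follow from mere integrality of $Y_{i,s}$, and this is precisely why the paper works with \emph{geometrically irreducible} fibres (via Koll\'ar's lemma) rather than integral ones. Relatedly, your $\tilde S$ is the normalization in the \emph{full} algebraic closure of $K(S)$ in $K(W)$, whereas the correct object is the normalization in the \emph{separable} closure; this self-inflicted choice is what forces your parenthetical about radicial covers, which is both unnecessary here (the generic fibre of $f$ is geometrically reduced because a non-thin, hence Zariski-dense, set of special fibres is, so $L/K(S)$ is already separable) and not obviously true as a general assertion about thin sets. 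Finally, your Galois-closure argument that $\{s:\#\tilde S_s\ge 2\}$ is thin is correct but more than you need: it suffices that $\{s:\tilde S_s(K)\neq\emptyset\}=r(\tilde S(K))$ is thin when $\deg r\ge 2$, and for the remaining $s$ one has $Y_{i,s}(K)=\emptyset$ outright --- exactly the paper's shortcut.
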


As an immediate consequence we get the following result
for a family of varieties over a variety of Hilbert type:
 
\begin{coro}\label{maincor}
Let $K$ be a field and $f\colon X\to S$ a dominant morphism of $K$-varieties.
Assume that $S$ is of Hilbert type and that for every $s\in S(K)$ the fiber $f^{-1}(s)$ is of
Hilbert type. Then $X$ is of Hilbert type.
\end{coro}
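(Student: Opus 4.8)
The plan is to deduce this directly from Theorem~\ref{mainthm}, so the only thing to verify is that the hypotheses of the Corollary imply the hypothesis of the Theorem: namely, that the set
\[
T = \{\, s \in S(K) : f^{-1}(s) \text{ is a } K\text{-variety of Hilbert type} \,\}
\]
is not thin in $S$. First I would note that by assumption, for \emph{every} $s \in S(K)$ the fiber $f^{-1}(s)$ is of Hilbert type, and that being of Hilbert type presupposes being a $K$-variety. (This is worth making explicit: since $f$ is only assumed dominant, a priori some scheme-theoretic fibers over $K$-rational points could be empty or non-integral, and the empty variety is thin; so the phrase ``$f^{-1}(s)$ is of Hilbert type'' does carry genuine content, but it is exactly what is assumed.) Hence $T = S(K)$.

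Next, since $S$ is of Hilbert type, $S(K)$ is by definition not thin, and therefore $T = S(K)$ is not thin. This is precisely the hypothesis required to apply Theorem~\ref{mainthm} to the dominant morphism $f\colon X \to S$; the Theorem then yields that $X$ is of Hilbert type, which completes the argument.

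There is no real obstacle here: all of the mathematical content sits in Theorem~\ref{mainthm}, and the Corollary is simply the special case in which the ``good locus'' of base points $s$ exhausts $S(K)$. The only mild point of care is the bookkeeping around what ``$f^{-1}(s)$ is of Hilbert type'' means for possibly empty or non-integral scheme-theoretic fibers, but this is a matter of fixing conventions (consistent with those used to state Theorem~\ref{mainthm}) rather than a genuine difficulty.
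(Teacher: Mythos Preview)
Your proposal is correct and matches the paper's approach: the paper presents Corollary~\ref{maincor} as an immediate consequence of Theorem~\ref{mainthm} with no separate proof, and your argument spells out exactly this deduction (the set $T$ equals $S(K)$, which is not thin since $S$ is of Hilbert type). Your remarks about the meaning of ``$f^{-1}(s)$ is of Hilbert type'' are appropriate caveats but, as you note, do not affect the argument.
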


Using this result we resolve both questions discussed above affirmatively, see Corollary~\ref{cor:Serre} and Proposition~\ref{prop:linear}.

\section{Background}
\label{sec:notation}

Let $K$ be a field. A {\em $K$-variety} is a separated scheme of finite type over $K$ which is geometrically reduced and geometrically irreducible.
Thus, an open subscheme of a $K$-variety is again a $K$-variety.
If $f\colon X\to S$ is a morphism of $K$-varieties and $s\in S(K)$, then $f^{-1}(s):=X\times_S \Spec(\kappa(s))$,
where $\kappa(s)$ is the residue field of $s$,
denotes the scheme theoretic fiber of $f$ at $s$. 
This fiber is a separated scheme of finite type over $K$, which needs not be reduced
or connected in general. We identify the set $f^{-1}(s)(K)$ of $K$-rational points of the fiber with the set theoretic fiber
$\{x\in X(K)\mid f(x)=s\}$. 

Let $X$ be a $K$-variety.
A subset $T$ of $X(K)$ is called {\em thin} 
if there exists a proper Zariski-closed subset $C$ of $X$, a finite set $I$, and for each $i\in I$ a $K$-variety $Y_i$ with $\dim Y_i = \dim X$ and a dominant separable morphism $p_i\colon Y_i\to X$ of degree $\geq 2$ such that 
\begin{eqnarray*}
T &\subseteq& \bigcup_{i\in I} p_i(Y_i(K))\cup C(K).
\end{eqnarray*}
A $K$-variety $X$ is {\em of Hilbert type} if $X(K)$ is not thin, cf.~\cite[Definition~3.1.2]{Serre}.
Note that $X$ is of Hilbert type if and only if some (or every) open subscheme of $X$ is of Hilbert type, cf.~\cite[p.~20]{Serre}.
A field $K$ is {\em Hilbertian} if $\Aa_K^1$ is of Hilbert type.
We note that if there exists a $K$-variety $X$ of positive dimension such that $X$ is of Hilbert type, then $K$ is Hilbertian 
\cite[Proposition 13.5.3]{FriedJarden}.

\section{Proof of Theorem \ref{mainthm}}
A key tool in the proof of Theorem \ref{mainthm} is the following consequence of Stein factorization. 

\begin{lemm}\label{kol}
Let $K$ be a field and $\psi\colon Y\to S$ a dominant morphism of normal $K$-varieties. 
Then there exists a nonempty open subscheme $U\subset S$, a $K$-variety $T$ and a factorization 
\[
\xymatrix{
\psi^{-1}(U)\ar[r]^-{g}& T\ar[r]^-{r}&U
}
\]
of $\psi$ such that the fibers of $g$ are geometrically irreducible and $r$ is finite and \'etale.
\end{lemm}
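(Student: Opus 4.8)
The statement is essentially a geometric repackaging of Stein factorization together with the fact that, generically over the base, the "number of geometric connected components" of the fibers is locally constant and is governed by an étale cover. I would proceed as follows.

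First I would apply Stein factorization to $\psi\colon Y\to S$ (or rather to a proper model, after shrinking): write $\psi = r_0\circ g_0$ where $g_0\colon Y\to T_0 := \operatorname{Spec}_S(\psi_*\mathcal{O}_Y)$ has geometrically connected fibers and $T_0\to S$ is finite. The subtlety is that $Y\to S$ need not be proper, so the naive pushforward $\psi_*\mathcal{O}_Y$ need not be coherent; here I would invoke the relative version of Stein factorization valid over a nonempty open $U\subset S$ (this is the "consequence of Stein factorization" alluded to — e.g.\ via Nagata compactification, or by citing the standard result, cf.\ the EGA III / \cite{FriedJarden} literature on generic smoothness of fibrations). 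After shrinking $U$ I may assume $g\colon \psi^{-1}(U)\to T$ is a dominant morphism with geometrically connected generic fiber and $r\colon T\to U$ is finite, with $T$ a $K$-variety (irreducibility of $T$ follows from that of $Y$ since $g$ is dominant with connected fibers, and after shrinking I can also assume $T$ is geometrically reduced, hence a $K$-variety).

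Next I would upgrade "geometrically connected generic fiber" to "all fibers geometrically irreducible" and "$r$ finite" to "$r$ finite étale" by shrinking $U$ further. For the étaleness: $T\to U$ is finite and dominant between normal varieties in characteristic-free generality, so by generic smoothness / generic flatness it is flat over a nonempty open, and its branch locus is a proper closed subset of $U$; removing it makes $r$ finite étale. For geometric irreducibility of all fibers of $g$: the locus in $T$ where the fiber of $g$ fails to be geometrically irreducible is constructible and does not contain the generic point, hence is contained in a proper closed subset $Z\subset T$; since $r$ is finite, $r(Z)$ is a proper closed subset of $U$, and I shrink $U$ to its complement, replacing $T$ by $r^{-1}$ of the new $U$ and $\psi^{-1}(U)$ accordingly. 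One must also check that this shrinking keeps the fibers reduced (generic reducedness of fibers again by a standard openness argument in the geometrically reduced setting) so that "geometrically irreducible" is the right statement; if one only wants geometric irreducibility of the underlying reduced fibers that step is unnecessary, but I would include it for cleanliness. Finally, normality of $T$ (or at least that $T$ is a $K$-variety) after all shrinkings is automatic since being a $K$-variety is an open condition.

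**Main obstacle.** The genuine difficulty is the non-properness of $\psi$: Stein factorization is usually stated for proper morphisms, and one cannot in general form $\operatorname{Spec}_S(\psi_*\mathcal{O}_Y)$ globally. The clean fix is to note that it suffices to have the factorization over \emph{some} nonempty open $U$, and over a suitable $U$ one can arrange $\psi$ to be, or be dominated by, a nice enough morphism (compactify the generic fiber, spread out, then restrict to where the construction behaves). The remaining steps — flatness, unramifiedness, constancy of the number of geometric components, reducedness and irreducibility of fibers — are all instances of the general principle that a constructible property that holds at the generic point holds over a nonempty open, combined with the fact that the image of a closed set under a finite morphism is closed; these I would treat as routine and not belabor.
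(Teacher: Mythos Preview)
The paper itself gives no proof: it simply cites \cite[Lemma~9]{kollar}. Your sketch therefore supplies more than the paper does, and its overall shape---construct $T$ so that $Y\to T$ has geometrically irreducible generic fiber, then spread out using constructibility (EGA~IV$_3$, 9.7.7) and make $r$ \'etale by removing the branch locus---is correct and is essentially the content of Koll\'ar's argument.

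Two comments. First, the detour through Nagata compactification and proper Stein factorization is heavier than needed. Since the claim is only required over \emph{some} open $U$, the birational construction suffices: let $L$ be the separable closure of $K(S)$ inside $K(Y)$, set $T$ equal to the normalization of $S$ in $L$, and let $g$ be the induced morphism (it exists because $Y$ is normal and $T\to S$ is finite). Then $K(T)=L$ is separably closed in $K(Y)$, so the generic fiber of $g$ is automatically geometrically irreducible, and $r\colon T\to S$ is generically \'etale since $L/K(S)$ is separable; this sidesteps the properness issue entirely and is closer to Koll\'ar's actual proof. Second, in your version there is a small jump: Stein factorization only yields that the generic fiber of $g$ is geometrically \emph{connected}, yet you then assert that the bad locus for geometric \emph{irreducibility} misses the generic point. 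This is true---with a \emph{normal} compactification the Stein factor has function field algebraically closed in $K(Y)$, forcing the generic fiber to be geometrically irreducible---but it deserves a sentence rather than being absorbed into ``upgrade''.
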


\begin{proof}
See \cite[Lemma 9]{kollar}.
\end{proof}

\begin{lemm} \label{keylemm} 
Let $K$ be a field and $f\colon X\to S$ a dominant morphism of normal $K$-varieties. 
Assume that 
the set $\Sigma$ of $s\in S(K)$ for which $f^{-1}(s)$ is a $K$-variety of Hilbert type is not thin.
Let $I$ be a finite set and let $p_i\colon Y_i\to X$, $i\in I$, be finite \'etale morphisms of degree $\geq 2$. Then
$X(K) \not\subseteq \bigcup_{i\in I} p_i(Y_i(K))$.
\end{lemm}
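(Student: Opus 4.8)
The plan is to choose a suitable point $s$ in $\Sigma$, to restrict the covers $p_i$ to the fibre $X_s:=f^{-1}(s)$, and to play this off against the fact that $X_s$ is of Hilbert type. The starting observation is that for $x\in X_s(K)$ the scheme-theoretic fibres $p_i^{-1}(x)$ and $p_{i,s}^{-1}(x)$ coincide, where $p_{i,s}\colon Y_{i,s}:=Y_i\times_X X_s\to X_s$ is the base change; hence an inclusion $X(K)\subseteq\bigcup_{i}p_i(Y_i(K))$ would force $X_s(K)\subseteq\bigcup_i p_{i,s}(Y_{i,s}(K))$ for every $s\in S(K)$, and it will suffice to rule this out for one $s\in\Sigma$. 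Now each $p_{i,s}$ is again finite étale of degree $\ge 2$; decomposing $Y_{i,s}$ into connected components, a component that is not geometrically connected has no $K$-points (a $K$-point is $\Gal$-invariant, lies on a single geometric component, and thus forces that component to be $\Gal$-invariant, hence to be the only one), while a geometrically connected component is a $K$-variety, finite étale over $X_s$ of some degree $d\ge1$. So I will argue that $\bigcup_i p_{i,s}(Y_{i,s}(K))$ is a \emph{thin} subset of $X_s(K)$ --- which contradicts $s\in\Sigma$ --- provided no $p_{i,s}$ has a degree-one component, i.e.\ provided no $p_{i,s}$ admits a section. The whole problem thus reduces to: find $s\in\Sigma$ for which none of the $p_{i,s}$ admits a section.

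For this I will use Lemma \ref{kol}. After passing to connected components and discarding those with no $K$-points, I may assume each $Y_i$ is a geometrically irreducible $K$-variety, so that $\psi_i:=f\circ p_i\colon Y_i\to S$ is a dominant morphism of normal $K$-varieties; applying Lemma \ref{kol} gives a nonempty open $U_i\subseteq S$, a $K$-variety $T_i$, and a factorization $\psi_i^{-1}(U_i)\xrightarrow{\,g_i\,}T_i\xrightarrow{\,r_i\,}U_i$ with $g_i$ having geometrically irreducible fibres and $r_i$ finite étale. If $\deg r_i=1$, then $r_i$ is an isomorphism, so $\psi_i$ itself has geometrically irreducible fibres over $U_i$, whence $Y_{i,s}$ is geometrically irreducible --- and $p_{i,s}$ a genuine cover of degree $\ge2$ with no section --- for all $s\in U_i(K)$. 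If $\deg r_i\ge2$, then composing $r_i$ with $U_i\hookrightarrow S$ shows that $r_i(T_i(K))$ is a thin subset of $S(K)$, and for $s\in U_i(K)\setminus r_i(T_i(K))$ with $X_s(K)\ne\varnothing$ the map $p_{i,s}$ has no section: a section $\sigma$, composed with $g_i$ and evaluated at any point of $X_s(K)$, would produce a $K$-point of $r_i^{-1}(s)$, which is empty by the choice of $s$.

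To finish, I would set $\Theta:=\Sigma\setminus\bigl(\bigcup_i(S\setminus U_i)(K)\cup\bigcup_{\deg r_i\ge2}r_i(T_i(K))\bigr)$. Each set removed is thin and a finite union of thin sets is thin, so $\Theta$ is not thin and in particular nonempty; fix $s\in\Theta$. Then $X_s$ is a $K$-variety of Hilbert type (hence $X_s(K)\ne\varnothing$), $s$ lies in every $U_i(K)$, and by the dichotomy above no $p_{i,s}$ admits a section. By the first paragraph, an inclusion $X(K)\subseteq\bigcup_i p_i(Y_i(K))$ would then exhibit $X_s(K)$ as a thin set, contradicting $s\in\Sigma$. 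Hence $X(K)\not\subseteq\bigcup_{i}p_i(Y_i(K))$.

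The step I expect to be the crux is the use of Stein factorization: one must recognise that a genuine finite étale cover of $X$ of degree $\ge2$ can fail to restrict to a genuine cover of a fibre $X_s$ only by acquiring a section, and that the bad values of $s$ are governed by the $K$-points of the finite étale $S$-scheme $T_i$ coming from Lemma \ref{kol}, which form a thin set precisely when $T_i\to S$ is genuinely nontrivial. The remaining points --- the comparison of fibres $p_i^{-1}(x)=p_{i,s}^{-1}(x)$, and the handling of disconnected covers and of possible sections --- are routine.
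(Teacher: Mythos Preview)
Your proof is correct and follows essentially the same route as the paper's: apply Lemma~\ref{kol} to each $\psi_i=f\circ p_i$, choose $s\in\Sigma$ avoiding the thin set $\bigcup_{\deg r_i\ge 2}r_i(T_i(K))$ and the complements of the $U_i$, and then exploit that $X_s$ is of Hilbert type. The paper streamlines one step: for $i$ with $\deg r_i\ge 2$ it observes directly that $Y_{i,s}(K)=\varnothing$ (any $K$-point would map under $g_i$ into $r_i^{-1}(s)$), so those indices drop out entirely and no discussion of sections or of decomposing $Y_{i,s}$ is needed.
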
 

\begin{proof}
For $i\in I$ consider the
composite morphism $\psi_i:=f\circ p_i\colon Y_i\to S$. 
By Lemma~\ref{kol} there is a nonempty open subscheme $U_i$ of $S$ and a factorization 
$$\psi_i^{-1}(U_i)\buildrel g_i\over\longrightarrow T_i\buildrel r_i\over\longrightarrow U_i$$
of $\psi_i$ such that the morphism $g_i$ has geometrically irreducible fibers, $r_i$ is finite and \'etale, and
such that $T_i$ is a $K$-variety.
We now replace successively $S$ by $\bigcap_{i\in I} U_i$, $X$ by $f^{-1}(S)$, $T_i$ by $r_i^{-1}(S)$ and $Y_i$ by $p_i^{-1} (X)$,
to assume in addition that $r_i\colon T_i\rightarrow S$ is finite \'etale for every $i\in I$.

For $s\in S(K)$ denote by $X_s:=f^{-1}(s)$ the fiber of $f$ over $s$. 
Then $X_s$ is a $K$-variety of Hilbert type for each $s\in\Sigma$. 
Furthermore we define $Y_{i, s}:=\psi_i^{-1}(s)$
and let $p_{i,s}\colon Y_{i,s}\rightarrow X_s$ be the corresponding projection morphism.
Then $p_{i,s}$ is a finite \'etale morphism of the same degree as $p_i$.
In particular, the $K$-scheme $Y_{i, s}$ is geometrically reduced. 
For every $s\in S(K)$ and every $i\in I$ we have constructed a commutative diagram
\[
\xymatrix{
Y_{i, s}\ar[d]_{p_{i,s}}\ar[r] & Y_i\ar[d]_{p_i}\ar[r]^{g_i}\ar[dr]^{\psi_i} & T_i\ar[d]^{r_i}\\ 
X_s \ar[r]     &  X\ar[r]^{f} &  S
 }
 \]
in which the left hand rectangle is cartesian.
Set $J:=\{i\in I: \deg(r_i)\ge 2\}$. 
Then $\bigcup_{i\in J} r_i(T_i(K)) \subseteq S(K)$
is thin, so by assumption there exists 
$$
 s\in \Sigma\smallsetminus \bigcup_{i\in J} r_i(T_i(K)).
$$ 
For $i\in J$ there is no $K$-rational point of $T_i$ over $s$, hence $Y_{i, s}(K)=\emptyset$ for every $i\in J$. 
For $i\in I\smallsetminus J$, the finite \'etale morphism $r_i$ is of degree $1$, hence an isomorphism, and therefore $Y_{i, s}$ is geometrically irreducible.
Thus, $Y_{i,s}$ is a $K$-variety.
So since $X_s$ is of Hilbert type, there exists $x\in X_s(K)$ such
that $x\notin \bigcup_{i\in I\smallsetminus J} p_{i,s}(Y_{i, s}(K))$. Thus  
\[
x\not\in \bigcup_{i\in J\smallsetminus I}p_{i,s}(Y_{i, s}(K)) = \bigcup_{i\in I}p_{i,s}(Y_{i, s}(K)),
\]
hence $x\not\in \bigcup_{i\in I} p_i(Y_i(K))$, as needed.
\end{proof}

We shall use the following well-known fact.

\begin{lemm}\label{shrinking}
Let $K$ be a field, let $X,Y$ be $K$-varieties with $\dim X=\dim Y$,  and let $p\colon Y\to X$ be a dominant separable morphism. 
Then there exists a nonempty open subscheme $U$ of $X$ such that the restriction of $p$ to
a morphism $p^{-1}(U)\to U$ is finite and \'etale.
\end{lemm}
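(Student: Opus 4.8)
The plan is to localize at the generic point and then spread out. First I would pass to the generic point $\eta\in X$ and consider the function fields: since $p\colon Y\to X$ is dominant, it induces an embedding $K(X)\hookrightarrow K(Y)$, and since $\dim X=\dim Y$ this is a finite field extension; separability of $p$ means precisely that $K(Y)/K(X)$ is a finite \emph{separable} extension. In particular it is generated by a single element $\alpha$ whose minimal polynomial over $K(X)$ is separable, i.e.\ has nonzero discriminant in $K(X)$.

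Next I would spread this out to an open neighbourhood. Choose affine opens $\Spec A\subseteq X$ and $\Spec B\subseteq Y$ with $p(\Spec B)\subseteq\Spec A$, so that $A\hookrightarrow B$ and $\mathrm{Frac}(A)=K(X)$, $\mathrm{Frac}(B)=K(Y)$. After inverting finitely many elements of $A$ (equivalently, shrinking $X$), I may assume $\alpha\in B$, that $B=A[\alpha]=A[t]/(h(t))$ for the minimal polynomial $h$ of $\alpha$ (using that $B$ is a finitely generated $A$-algebra with fraction field $K(X)[\alpha]$, each generator can be expressed over $A[\alpha]$ after further localization), and that the discriminant of $h$ is a unit in $A$. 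Then $A\to B$ is finite (as $B$ is generated by $1,\alpha,\dots,\alpha^{\deg h-1}$ as an $A$-module) and \'etale (standard \'etale: a monogenic algebra modulo a polynomial whose derivative is invertible on the quotient, which the invertibility of the discriminant guarantees). Finally I would shrink once more so that $p^{-1}(U)$ equals this $\Spec B$: the complement $Y\setminus\Spec B$ is closed of dimension $<\dim Y=\dim X$, hence does not dominate $X$, so its image is contained in a proper closed subset $Z\subsetneq X$; replacing $U$ by $U\setminus Z$ ensures $p^{-1}(U)\subseteq\Spec B$ and the restriction $p^{-1}(U)\to U$ is a base change of the finite \'etale map $\Spec B\to\Spec A$, hence finite and \'etale.

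The main obstacle is the bookkeeping in the spreading-out step: one must simultaneously arrange that the chosen affine chart of $Y$ is \emph{all} of $p^{-1}(U)$ (not just an open subset of it), which is why the last shrinking — removing the image of the closed complement — is needed and uses the hypothesis $\dim Y=\dim X$ in an essential way. Everything else is the routine passage between generic-fiber statements over the function field and their spreading out over an affine open, together with the standard characterization of \'etale morphisms via invertibility of the discriminant. Alternatively, one can invoke generic smoothness / generic flatness plus the fact that a flat morphism which is unramified at the generic point is unramified on an open set, but the explicit monogenic argument above is self-contained and elementary.
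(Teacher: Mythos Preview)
The paper does not actually give a proof of this lemma: it is stated as a ``well-known fact'' and no argument is supplied, so there is nothing to compare your approach against. Your spreading-out argument is the standard one and is correct. The only places worth a second look are bookkeeping details you already flagged: that one may choose $\alpha\in B$ (replace $\alpha$ by $a_0\alpha$ for suitable $a_0\in A$), that the surjection $A_a[t]/(h)\twoheadrightarrow A_a[\alpha]$ is injective because $h$ is monic (division with remainder in $A_a[t]$), and that invertibility of the discriminant in $A_a$ indeed forces $h'(\alpha)$ to be a unit in $B_a$ (the discriminant is, up to sign, the norm of $h'(\alpha)$, and in a finite free extension an element is a unit iff its norm is). Your final shrinking step, removing the closure of $p(Y\setminus\Spec B)$, is exactly what is needed to guarantee $p^{-1}(U)\subseteq\Spec B$ and uses $\dim Y=\dim X$ in the right place.
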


\begin{proof}[Proof of Theorem \ref{mainthm}]
Let $K$ be a field, and $f\colon X\to S$ a dominant morphism of $K$-varieties. 
Assume that the set $\Sigma$ of those $s\in S(K)$ for which $f^{-1}(s)$ is of Hilbert type is not thin. 
Let $C\subseteq X$ be a proper Zariski-closed subset. Let $I$ be a finite set and suppose that $Y_i$ is a $K$-variety with $\dim(Y_i)=\dim(X)$ and $p_i\colon Y_i\to X$ is a dominant separable morphism of degree $\geq 2$, for every $i\in I$.
We have to show that $X(K)\not\subseteq C(K)\cup\bigcup_{i\in I}p_i(Y_i(K))$.

By Lemma \ref{shrinking} and \cite[6.12.6, 6.13.5]{EGAIV2} there exists a {normal} 
nonempty open subscheme $X'\subset X\smallsetminus
C$ such that the restriction of each $p_i$ to a morphism $p_i^{-1}(X')\to X'$ is finite and \'etale. 
The image $f(X')$ contains
a nonempty open subscheme $S'$ of $S$ (cf. \cite[1.8.4]{EGAIV1}, \cite[9.2.2]{EGAIII1}). 
Furthermore, $S'$ contains a 
nonempty normal open subscheme $S''$. Let us define $X'':=f^{-1}(S'')\cap X'$ and $Y''_i:=p_i^{-1}(X'')$.
Then the restriction of $f$ to a 
morphism $f''\colon X''\to S''$ is a surjective morphism of normal $K$-varieties, 
$\Sigma\cap S''(K)$ is not thin,
and $f''^{-1}(s)$ is of Hilbert type for every
$s\in \Sigma\cap S''(K)$ because it is an open subscheme of $f^{-1}(s)$. 
The restriction $p_i''$ of $p_i$ to a morphism $Y_i''
\to X''$ is finite and \'etale for every 
$i\in I$. 
By Lemma \ref{keylemm} applied to $f''$ and the $p_i''$ we have 
\begin{eqnarray*}
\emptyset&\neq& X''(K)\smallsetminus \bigcup_{i\in I} p_i''(Y_i''(K)) \\
&=& X''(K)\smallsetminus \bigcup_{i\in I} p_i(Y_i(K))\\
&\subseteq& X(K)\smallsetminus \Big(C(K)\cup\bigcup_{i\in I}p_i(Y_i(K))\Big),
\end{eqnarray*}
so $X(K)\not\subseteq C(K)\cup\bigcup_{i\in I}p_i(Y_i(K))$, as needed. 
\end{proof}


As an immediate consequence we get an affirmative solution of Serre's question mentioned in the introduction.

\begin{coro}\label{cor:Serre} Let $K$ be a field.
If $X,Y$ are $K$-varieties of Hilbert type, then $X\times Y$ is of Hilbert type.
\end{coro}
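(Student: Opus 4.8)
The plan is to deduce Corollary~\ref{cor:Serre} directly from Corollary~\ref{maincor} by exhibiting $X\times Y$ as the total space of a suitable dominant morphism to a base of Hilbert type whose fibers are all of Hilbert type. The natural candidate is the projection $\mathrm{pr}_X\colon X\times Y\to X$. First I would check that this is a legitimate morphism of $K$-varieties: since $X$ and $Y$ are geometrically reduced and geometrically irreducible, so is $X\times_K Y$ (the product of geometrically integral $K$-schemes is geometrically integral), and it is separated and of finite type; moreover $\mathrm{pr}_X$ is surjective, hence dominant. So the hypotheses of Corollary~\ref{maincor} are in reach provided we identify the fibers.

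Next I would identify the fibers of $\mathrm{pr}_X$ over $K$-rational points. For $x\in X(K)$, the fiber $\mathrm{pr}_X^{-1}(x)=(X\times Y)\times_X\Spec\kappa(x)$ with $\kappa(x)=K$, which is canonically isomorphic to $Y$ as a $K$-variety. Hence every fiber over a $K$-point is just $Y$, which is of Hilbert type by hypothesis. Likewise $X$, the base, is of Hilbert type by hypothesis. Applying Corollary~\ref{maincor} to $f=\mathrm{pr}_X$ then yields that $X\times Y$ is of Hilbert type, which is exactly the claim.

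I do not expect a serious obstacle here; the only points requiring a word of care are the two ``geometric'' hypotheses in the definition of a $K$-variety — namely that $X\times_K Y$ really is geometrically reduced and geometrically irreducible — and the canonical identification of the scheme-theoretic fiber $\mathrm{pr}_X^{-1}(x)$ with $Y$ for $x\in X(K)$. Both are standard: the former because geometric integrality is preserved under products of $K$-schemes, and the latter because base change of $X\times_K Y\to X$ along $\Spec K=\Spec\kappa(x)\to X$ returns $Y\times_K K\cong Y$. Once these are noted, the corollary is immediate. (One could equally well use $\mathrm{pr}_Y\colon X\times Y\to Y$; the choice is symmetric.)
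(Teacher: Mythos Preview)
Your proof is correct and follows exactly the paper's own argument: apply Corollary~\ref{maincor} to the projection $X\times Y\to X$, noting that each fiber over a $K$-point is isomorphic to $Y$ and hence of Hilbert type. The additional checks you spell out (that $X\times_K Y$ is a $K$-variety and that the fiber over $x\in X(K)$ is canonically $Y$) are appropriate but routine, and the paper simply takes them for granted.
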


\begin{proof} Denote by $f\colon X\times Y\to X$ the projection. Then $f^{-1}(x)$ is isomorphic to $Y$ and hence of Hilbert
type for every $x\in X(K)$. Thus Corollary \ref{maincor} gives that $X\times Y$ is of Hilbert type.
\end{proof}

\section{Algebraic groups of Hilbert type}

By an {\em algebraic group} over a field $K$ we shall mean a connected smooth group scheme over $K$.
Recall that such an algebraic group is a $K$-variety, see \cite[Exp ${\rm VI_A}$, 0.3, 2.1.2, 2.4]{SGA3}.
If $G$ is an algebraic group over $K$, then $G(K_s)$ is a $\Gal(K)$-group,
where $K_s$ denotes a separable closure of $K$ and $\Gal(K)=\Gal(K_s/K)$ is the absolute Galois group of $K$,
and there is an associated Galois cohomology pointed set $H^1(K, G)=H^1(\Gal(K),G(K_s))$,
which classifies isomorphism classes of  $G(K_s)$-torsors, cf.~\cite[Prop. 1.2.4]{NSW}. 

\begin{prop}\label{cor:exact} 
Let $K$ be a field and let 
$$
 1\rightarrow N\rightarrow G\buildrel{p}\over\longrightarrow Q \rightarrow 1
$$ 
be a short exact sequence of algebraic groups over $K$. If 
$H^1(K, N)=1$
and both $N$ and $Q$ are of Hilbert type, then $G$ is of Hilbert type.
\end{prop}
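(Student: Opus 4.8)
The plan is to apply Corollary~\ref{maincor} to the morphism $p\colon G\to Q$, which is dominant (indeed surjective) since it is the quotient map of a short exact sequence of algebraic groups. By hypothesis $Q$ is of Hilbert type, so it remains to show that every fiber $p^{-1}(q)$, $q\in Q(K)$, is of Hilbert type. We know already that the fiber over the identity $e\in Q(K)$ is $N$, which is of Hilbert type by assumption; the content of the argument is to transfer this to the other fibers.

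First I would observe that for $q\in Q(K)$ the fiber $p^{-1}(q)$ is an $N$-torsor over $K$: the group $N$ (acting on $G$ by translation) acts simply transitively on $p^{-1}(q)$ over $K_s$ because $p$ is surjective on $K_s$-points (as $H^1(K,N)=1$ already forces, but in any case surjectivity of a quotient map of algebraic groups on $K_s$-points is standard). The isomorphism class of this torsor is an element of $H^1(K,N)$. Since $H^1(K,N)=1$, every such torsor is trivial, so $p^{-1}(q)$ has a $K$-rational point $x_q$, and translation by $x_q$ inside $G$ gives an isomorphism of $K$-varieties $N\xrightarrow{\ \sim\ }p^{-1}(q)$, $n\mapsto x_q n$ (note $x_q n$ lies in $p^{-1}(q)$ because $p(x_q n)=p(x_q)p(n)=q\cdot e=q$). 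In particular each fiber $p^{-1}(q)$ is a $K$-variety isomorphic to $N$, hence of Hilbert type.

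With this in hand, Corollary~\ref{maincor} applied to the dominant morphism $p\colon G\to Q$ of $K$-varieties — $Q$ of Hilbert type, all fibers over $K$-points of Hilbert type — yields that $G$ is of Hilbert type, as required.

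The only genuine point requiring care is the torsor identification, and in particular the claim that the fiber $p^{-1}(q)$, a priori only a $K$-scheme, is actually a $K$-variety (geometrically reduced and geometrically irreducible): this is exactly what the triviality of the torsor delivers, since $p^{-1}(q)_{K_s}\cong N_{K_s}$, and $N$ is a $K$-variety. I expect this to be the main (and essentially the only) obstacle; once it is settled, the rest is a direct appeal to Corollary~\ref{maincor}.
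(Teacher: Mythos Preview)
Your proposal is correct and follows essentially the same approach as the paper: both show that each fiber $p^{-1}(q)$ is an $N$-torsor that is trivial by the hypothesis $H^1(K,N)=1$, hence isomorphic to $N$ as a $K$-variety and of Hilbert type, and then invoke Corollary~\ref{maincor}. Your version is slightly more explicit in writing down the isomorphism $n\mapsto x_q n$ once a $K$-point $x_q$ is found, whereas the paper phrases the torsor argument at the level of $\Gal(K)$-sets $F(K_s)$; the content is the same.
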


\begin{proof} 
It suffices to show that $p^{-1}(x)$ is of Hilbert type for every $x\in Q(K)$,
because then Corollary \ref{maincor} implies the assertion.
Let $x\in Q(K)$ and $F=p^{-1}(x)$. 
There is an exact sequence of $\Gal(K)$-groups 
$$
 1\to N(K_s)\to G(K_s)\to Q(K_s)\to 1,
$$ 
where the right hand map is surjective, because for every point $x\in Q(K_s)$ the fiber over $x$ is 
a non-empty $K_s$-variety and thus has a $K_s$-rational point.
Since the $\Gal(K)$-set $F(K_s)$ is a coset of $N(K_s)$, it
is a $N(K_s)$-torsor. Our hypothesis 
$H^1(K, N)=1$ implies that $F(K_s)$ is isomorphic to the trivial $N(K_s)$-torsor $N(K_s)$. 
It follows that $F$ is isomorphic to $N$ as a $K$-variety,
hence $F$ is of Hilbert type.
\end{proof}

Using this, we generalize the result of Colliot-Th\'el\`ene and Sansuc \cite[Corollary~7.15]{CTS} from reductive groups to arbitrary linear groups. 

\begin{thm}\label{prop:linear}
Every linear algebraic group $G$ over a perfect Hilbertian field $K$ is of Hilbert type.
\end{thm}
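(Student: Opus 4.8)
The plan is to reduce the general linear algebraic group to the reductive case treated by Colliot-Th\'el\`ene and Sansuc, using Proposition~\ref{cor:exact} as the inductive engine. Since $K$ is perfect, $G$ has a unipotent radical $R_u(G)$ defined over $K$, and the quotient $G/R_u(G)$ is a connected reductive $K$-group; moreover, since $K$ is Hilbertian (hence infinite) and perfect, $G/R_u(G)$ is of Hilbert type by \cite[Corollary~7.15]{CTS}. This gives a short exact sequence
\[
 1\to R_u(G)\to G\to G/R_u(G)\to 1
\]
of algebraic groups over $K$. If I can show that $R_u(G)$ is of Hilbert type and that $H^1(K,R_u(G))=1$, then Proposition~\ref{cor:exact} immediately yields that $G$ is of Hilbert type.

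Next I would handle the unipotent radical $U:=R_u(G)$. Over a perfect field $U$ is a split unipotent group: it admits a filtration $U=U_0\supset U_1\supset\cdots\supset U_n=1$ by normal $K$-subgroups with each successive quotient $U_i/U_{i+1}$ isomorphic to $\mathbb{G}_a$ (this uses perfectness crucially). I would prove $U$ is of Hilbert type by downward induction on this filtration: the top quotient is $\mathbb{G}_a\cong\mathbb{A}^1_K$, which is of Hilbert type precisely because $K$ is Hilbertian; and at each step one has an exact sequence $1\to \mathbb{G}_a\to U_i/U_{i+2}\to U_{i+1}/U_{i+2}\to 1$ (or more directly $1\to U_{i+1}\to U_i\to \mathbb{G}_a\to 1$), with both the kernel and quotient of Hilbert type by the inductive hypothesis. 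To apply Proposition~\ref{cor:exact} at each stage I also need the vanishing $H^1(K,\mathbb{G}_a)=0$, which is the additive Hilbert~90 and holds over any field. This simultaneously shows that each $H^1(K,U_i)$ fits into an exact cohomology sequence with vanishing ends, so $H^1(K,U)=1$ as well — though in fact $H^1(K,U)=0$ for split unipotent $U$ over a perfect field is a standard fact one can simply cite.

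The main obstacle, and the place where I would be most careful, is the structure theory inputs rather than the Hilbert-type bookkeeping: one must invoke that over a perfect field the unipotent radical is defined over $K$, that $G/R_u(G)$ is reductive and that the reductive case is covered by \cite{CTS} (which is stated for number fields there, so I would want to confirm the version valid over any perfect Hilbertian field, or cite the appropriate strengthening), and that split unipotent groups over perfect fields carry a $\mathbb{G}_a$-filtration. Each of these is classical, but they are the substantive ingredients; once they are in place the argument is a short double induction feeding into Proposition~\ref{cor:exact}, with the cohomological hypotheses supplied by additive Hilbert~90.
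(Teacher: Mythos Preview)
Your approach is essentially identical to the paper's: reduce to the reductive quotient via \cite[Corollary~7.15]{CTS}, handle the unipotent radical through its $\mathbb{G}_a$-filtration (available because $K$ is perfect), and feed the pieces into Proposition~\ref{cor:exact} inductively. One small slip to fix: in the sequence $1\to U_{i+1}\to U_i\to \mathbb{G}_a\to 1$ the cohomological hypothesis of Proposition~\ref{cor:exact} is on the \emph{kernel}, so what you actually need at each stage is $H^1(K,U_{i+1})=1$, not $H^1(K,\mathbb{G}_a)=0$; the paper handles this by citing \cite[Ch.~III.2.1, Prop.~6]{SerreGalois} for the vanishing of $H^1$ of every unipotent group over a perfect field, which is exactly the standard fact you mention as an alternative.
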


\begin{proof} 
We denote by $G_u$ the unipotent radical of $G$ (cf. \cite[Proposition XVII.1.2]{milne}). 
We have a short exact sequence
of algebraic groups over $K$
\renewcommand{\theequation}{$\ast$}
\begin{equation}\label{eq}
 1\rightarrow G_u\rightarrow G\rightarrow Q\rightarrow 1 
\end{equation}
with $Q$ reductive, cf.~\cite[Proposition XVII.2.2]{milne}.
By \cite[Corollary 7.15]{CTS}, $Q$ is of Hilbert type. 
Since $K$ is perfect, $G_u$ is split, i.e.~there exists a series of normal algebraic subgroups
$$
 1=U_0\subseteq\dots\subseteq U_n=G_u
$$
such that $U_{i+1}/U_i\cong\mathbb{G}_a$ for each $i$,
cf. \cite[15.5(ii)]{borel}. The groups $U_i$ are unipotent, hence $H^1(K, U_i)=1$ by 
\cite[Ch.~III.2.1, Prop.~6]{SerreGalois}, and $\mathbb{G}_a$ is of Hilbert type since $K$ is Hilbertian.
Thus, an inductive application of Proposition \ref{cor:exact} 
implies that $G_u$ is of Hilbert type.
Finally we apply Proposition \ref{cor:exact} to the exact sequence (\ref{eq}) to conclude that $G$ is
of Hilbert type.
\end{proof}

\begin{rema}
The special case of Theorem \ref{prop:linear} where $G$ is simply connected and $K$ is finitely generated is also a consequence of a result of Corvaja, see \cite[Corollary 1.7]{Corvaja}.
\end{rema}

\begin{rema}
We point out that Theorem \ref{prop:linear} could be deduced also from Corollary \ref{cor:Serre} (instead of \ref{mainthm})
via \cite[Corollary 1]{Rosenlicht} and the fact that a unipotent group over a perfect field is rational, cf.~\cite[XIV, 6.3]{SGA3}.
\end{rema}

As a consequence of Theorem \ref{prop:linear}, we get a more general statement for homogeneous spaces,
which was pointed out to us by Borovoi:

\begin{coro}\label{cor:Borovoi}
If $G$ is a linear algebraic group over a perfect Hilbertian field $K$, and $H$ is algebraic subgroup of $G$, then
the quotient $G/H$ is of Hilbert type.
\end{coro}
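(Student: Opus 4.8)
The plan is to deduce this from Theorem~\ref{prop:linear} (applied to $G$ and to certain twisted forms of $G$) by analysing the quotient morphism $\pi\colon G\to G/H$. Write $\bar G:=G/H$; this is a $K$-variety carrying the distinguished $K$-point $eH$, so $\bar G(K)\neq\emptyset$. The key claim is that the image $\pi(G(K))\subseteq\bar G(K)$ is not thin; since a set containing a non-thin set is itself non-thin, this immediately gives that $\bar G$ is of Hilbert type.

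I would first treat the case that $H$ is smooth and connected, which is clean. Then $\pi$ is faithfully flat and smooth with geometrically connected and geometrically reduced fibres. If $\pi(G(K))$ were thin, say $\pi(G(K))\subseteq C(K)\cup\bigcup_{i\in I}p_i(Y_i(K))$ in the notation of the definition, put $Y_i':=Y_i\times_{\bar G}G$ with its projections $q_i\colon Y_i'\to Y_i$ and $\pi_i\colon Y_i'\to G$. Since $q_i$ is the base change of $\pi$ it is again smooth with geometrically connected and geometrically reduced fibres, so, as $Y_i$ is a $K$-variety, $Y_i'$ is a $K$-variety of dimension $\dim Y_i+\dim H=\dim\bar G+\dim H=\dim G$; and $\pi_i$, being the base change of $p_i$ along the surjective morphism $\pi$, is dominant and separable of degree $\deg(p_i)\geq 2$, while $\pi^{-1}(C)$ is a proper closed subset of $G$. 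As $\pi(g)\in\pi(G(K))$ for every $g\in G(K)$, this yields $G(K)\subseteq\pi^{-1}(C)(K)\cup\bigcup_{i}\pi_i(Y_i'(K))$, so $G(K)$ would be thin, contradicting Theorem~\ref{prop:linear}.

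For general $H$ I would reduce to the connected case in two steps. Since $K$ is perfect, $H_{\rm red}$ is a smooth closed $K$-subgroup, and $G/H_{\rm red}\to G/H$ is finite, surjective and purely inseparable, hence a universal homeomorphism which is bijective on $K$-points; transferring thin sets along this morphism (a separable cover of $G/H$ pulls back to one of $G/H_{\rm red}$ of the same degree and with irreducible total space, since purely inseparable and separable field extensions are linearly disjoint) shows that one may assume $H$ smooth. It then remains to pass from $H^\circ$ to $H$: the tower $G/H^\circ\to G/H$ is a torsor under the finite group $\pi_0(H)=H/H^\circ$, and I would induct on $|\pi_0(H)|$. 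Given a hypothetical thin covering of $(G/H)(K)$, those $Y_i$ whose function field embeds into $K(G/H^\circ)$ over $K(G/H)$ are $K$-birational to intermediate quotients $G/\tilde H$ with $H^\circ\subseteq\tilde H\subsetneq H$, hence of Hilbert type by induction; the remaining $Y_i$ pull back under $\pi$ to honest degree-$\geq 2$ covers of $G$ as in the connected case. One is then reduced to exhibiting a $K$-point of $G/H$ lying in the image of $G/\tilde H\to G/H$ for none of the finitely many intermediate quotients, which, via the connecting map $\delta\colon(G/H)(K)\to H^1(K,\pi_0(H))$, amounts to producing a torsor $c$ under $\pi_0(H)$ over the Hilbertian field $K$ not induced from any of the relevant proper subgroups, and then realising it by a $K$-point — here one uses Theorem~\ref{prop:linear} for the corresponding twist ${}_cG$ of $G$, whose $K$-points surject onto the ``sheet'' $\delta^{-1}(c)$.

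The hard part is precisely this last step. Finite étale descent of the Hilbert property genuinely fails — already $x\mapsto x^2\colon\mathbb{G}_m\to\mathbb{G}_m$ shows that the image of the $K$-points of a cover can be thin — so the passage from $H^\circ$ to $H$ cannot be carried out one cover at a time; it must exploit that $(G/H)(K)$, although a union over $H^1(K,\pi_0(H))$ of possibly thin sheets, is non-thin as a whole, and this is where a non-triviality input for torsors under $\pi_0(H)$ over $K$ (together with the inductive hypothesis) enters. By contrast the connected case is essentially formal: geometric connectedness of the fibres of $\pi$ is exactly what keeps the fibre products $Y_i\times_{\bar G}G$ irreducible and lets one transfer a thin covering of $G/H$ to a thin covering of $G$, contradicting Theorem~\ref{prop:linear}.
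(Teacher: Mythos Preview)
Your connected smooth case is correct and is essentially the paper's argument unpacked: the paper observes that the generic fibre of $G\to G/H$ is geometrically irreducible (via translation, $\mathcal H_{\bar F}\cong H_{\bar F}$) and then simply cites \cite[Proposition~7.13]{CTS}, which says that a dominant morphism with geometrically irreducible generic fibre pushes the Hilbert property down. Your pullback of a hypothetical thin cover of $\pi(G(K))$ to a thin cover of $G(K)$ is precisely the mechanism behind that proposition, so the two proofs coincide at the level of ideas; you just reprove the cited result in situ. One remark: the paper only needs the \emph{generic} fibre to be geometrically irreducible, whereas you use all fibres---this is harmless here but worth noting.

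Everything from ``For general $H$'' onward is unnecessary: in this paper an \emph{algebraic group} is by definition connected and smooth (see the first line of Section~4), so ``$H$ is an algebraic subgroup of $G$'' already forces $H$ to be connected smooth, and the statement is exactly your clean case. Your reductions to that case are therefore not part of what is being asked. This matters because, as you yourself flag, the disconnected step is the genuinely delicate one: the induction on $|\pi_0(H)|$ via twists ${}_cG$ and the connecting map $\delta$ is only sketched, and the claim that one can always realise a suitable class $c\in H^1(K,\pi_0(H))$ by a $K$-point of $G/H$ (i.e.\ that the relevant sheet is nonempty, let alone non-thin enough) is not justified---the twisted group ${}_cG$ is of Hilbert type, but the map from ${}_cG(K)$ lands in the sheet of a \emph{twisted} quotient, and matching this up with the original $G/H$ needs more care than you give it. None of this affects the corollary as stated.
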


\begin{proof}
For the existence of the quotient $Q:=G/H$ see for example \cite[Chapter II Theorem 6.8]{borel}.
If $\mathcal{H}$ denotes the generic fiber of $G\to Q$ and $\bar{F}$ is an algebraic closure of the function field $K(Q)$ of $Q$,
then $\mathcal{H}_{\bar{F}}\cong H_{\bar{F}}$ by translation on $G$.
Thus, $\mathcal{H}$ is geometrically irreducible since $H$ is,
so \cite[Proposition~7.13]{CTS} implies that $Q$ is of Hilbert type.
\end{proof}

We also get a complete classification of the algebraic groups that are of Hilbert type over a number field:

\begin{coro}
An algebraic group $G$ over a number field $K$ 
is of Hilbert type if and only if it is linear.
\end{coro}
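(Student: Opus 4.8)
The plan is to treat the two implications separately. If $G$ is linear, then $G$ is of Hilbert type by Theorem~\ref{prop:linear}, since a number field is perfect and, by Hilbert's irreducibility theorem, Hilbertian. So the real work is the converse: I will show that if $G$ is \emph{not} linear, then $G(K)$ is thin.

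Suppose $G$ is not linear. Since $K$ is perfect, Chevalley's structure theorem gives a short exact sequence $1\to L\to G\xrightarrow{\pi}A\to 1$ of algebraic groups over $K$ with $L$ connected linear and $A$ an abelian variety, and here $\dim A\geq 1$. The homomorphism $\pi$ is faithfully flat with geometrically irreducible fibres: each fibre is an $L$-torsor, which over an algebraic closure acquires a rational point and hence becomes isomorphic to $L$, a connected smooth group. By the Mordell--Weil theorem $A(K)$ is a finitely generated abelian group, so its subgroup $\Gamma:=\pi(G(K))$ is finitely generated and $\Gamma/2\Gamma$ is finite. Choosing coset representatives $Q_1,\dots,Q_m\in\Gamma$, we get $\Gamma\subseteq\bigcup_{j=1}^m(Q_j+2A(K))$. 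For each $j$ let $\phi_j\colon A\to A$ be multiplication by $2$ followed by translation by $Q_j$; this is a finite \'etale morphism of degree $2^{2\dim A}\geq 2$ with $\phi_j(A(K))=Q_j+2A(K)$.

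Now set $W_j:=G\times_{A,\phi_j}A$, with projections $\rho_j\colon W_j\to G$ and $\sigma_j\colon W_j\to A$. As a base change of $\phi_j$, the morphism $\rho_j$ is finite \'etale of degree $2^{2\dim A}\geq 2$; as a base change of $\pi$, the morphism $\sigma_j$ is faithfully flat and smooth with geometrically irreducible fibres, so $W_j$ is geometrically reduced (being smooth over $K$), geometrically irreducible (flat with irreducible fibres over the irreducible base $A$), and of dimension $\dim G$ --- in particular a $K$-variety with $\dim W_j=\dim G$. On $K$-points, $\rho_j(W_j(K))=\{g\in G(K):\pi(g)\in\phi_j(A(K))\}$, hence
\[
G(K)=\{g\in G(K):\pi(g)\in\Gamma\}
\subseteq\bigcup_{j=1}^m\{g\in G(K):\pi(g)\in\phi_j(A(K))\}
=\bigcup_{j=1}^m\rho_j(W_j(K)).
\]
Taking $C=\emptyset$, $Y_j=W_j$ and $p_j=\rho_j$ in the definition of a thin set exhibits $G(K)$ as thin, so $G$ is not of Hilbert type, which completes the proof.

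The only delicate point is checking that each $W_j$ is a $K$-variety of dimension $\dim G$ carrying a degree-$\geq 2$ finite \'etale map to $G$; this is why it is convenient to record first that $\pi$ is smooth and faithfully flat with geometrically irreducible fibres, after which both properties of $W_j$ drop out by base change. Everything else --- Chevalley's theorem, the Mordell--Weil theorem, and the translation trick that rewrites the finitely generated group $\Gamma$ as a finite union of images of degree-$\geq 2$ \'etale covers --- is standard.
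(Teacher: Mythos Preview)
Your proof is correct, but it follows a genuinely different route from the paper's.

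The paper argues by contrapositive in the other direction: assuming $G$ is of Hilbert type, it uses Chevalley's theorem to obtain $1\to H\to G\to A\to 1$ with $H$ linear and $A$ an abelian variety, observes that the generic fibre of $G\to A$ is geometrically irreducible, and then invokes \cite[Proposition~7.13]{CTS} to conclude that $A$ itself is of Hilbert type. Since no nontrivial abelian variety over a number field is of Hilbert type \cite[Remark~13.5.4]{FriedJarden}, $A$ must be trivial and $G\cong H$ is linear.

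You instead work directly on $G$: after Chevalley, you pull back the multiplication-by-$2$ isogeny (shifted by coset representatives coming from Mordell--Weil) along $\pi\colon G\to A$ to produce finitely many finite \'etale covers $\rho_j\colon W_j\to G$ of degree $\ge 2$ whose images cover $G(K)$. In effect you are unwinding the standard proof that $A(K)$ is thin (Mordell--Weil plus $[2]$) and executing it on $G$ via base change, verifying carefully that each $W_j$ remains a $K$-variety because $\pi$ is smooth with geometrically irreducible fibres.

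The trade-off: the paper's argument is shorter and more modular, cleanly separating ``Hilbert type descends along morphisms with geometrically irreducible generic fibre'' from ``abelian varieties over number fields are not of Hilbert type'', at the cost of two black-box citations. Your argument is more self-contained --- it bypasses \cite[Proposition~7.13]{CTS} entirely and makes explicit exactly which covers witness the thinness of $G(K)$ --- at the cost of a slightly longer verification that the $W_j$ are geometrically irreducible.
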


\begin{proof}
If $G$ is linear, then it is of Hilbert type by Theorem \ref{prop:linear}.
Conversely, assume that $G$ is of Hilbert type.
Chevalley's theorem \cite[Theorem 1.1]{conrad2002} gives a short exact sequence of algebraic groups over $K$,
$$
 1 \rightarrow H\rightarrow G\rightarrow A\rightarrow 1
$$ 
with $H$ linear and $A$ an abelian variety.
As in the proof of Corollary \ref{cor:Borovoi} we conclude that the generic fiber of $G\rightarrow A$ is geometrically irreducible,
and therefore $A$ is of Hilbert type.
Since no
nontrivial abelian variety over a number field is of Hilbert type, 
cf.~\cite[Remark 13.5.4]{FriedJarden},
$A$ is trivial and $G\cong H$ is linear.
\end{proof}

\section*{Acknowledgements}

The authors would like to thank 
Mikhail Borovoi for pointing out to them Corollary \ref{cor:Borovoi},
Jean-Louis Colliot-Th\'el\`ene and Moshe Jarden for helpful comments on a previous version,
and Daniel Krashen for some references concerning algebraic groups.
This research was supported by the Lion Foundation Konstanz -- Tel Aviv, 
the Alexander von Humboldt Foundation, 
the Hermann Minkowski Minerva Center for Geometry at Tel Aviv University,
and by a grant from the GIF, the German-Israeli Foundation for Scientific Research and Development.


\begin{thebibliography}{10}

\bibitem{borel}
{Armand Borel}.
\newblock {\em {Linear algebraic groups}}.
\newblock {Springer}, 1991.

\bibitem{CTS}
Jean-Louis Colliot-Th\'el\`ene and Jean-Jacques Sansuc.
\newblock Principal homogeneous spaces under flasque tori: applications.
\newblock {\em Journal of Algebra}, 106:148--205, 1987.

\bibitem{Corvaja}
Pietro Corvaja.
\newblock Rational fixed points for linear groups.
\newblock {\em Ann. Sc. Norm. Super. Pisa, Cl. Sci. (5)}, 6(4):561-597, 2007.

\bibitem{conrad2002}
Brian Conrad.
\newblock {A modern proof of Chevalley's theorem.}
\newblock {\em {J. Ramanujan Math. Soc.}}, (17):1--18, 2002.

\bibitem{FriedJarden}
M.~Fried and M.~Jarden.
\newblock {\em Field Arithmetic}.
\newblock Ergebnisse der Mathematik III {\bf 11}. Springer, 2008.
\newblock {3rd edition, revised by M. Jarden}.

\bibitem{EGAIII1}
Alexander Grothendieck.
\newblock {\'El\'ements de g\'eom\'etrie alg\'ebrique (r\'edig\'e avec la
 cooperation de Jean Dieudonn\'e): II. \'Etude cohomologique des faisaux coh\'erents}.
\newblock {\em {Publ. Math. IHES}}, ({11}):5--167, 1961.

\bibitem{EGAIV1}
Alexander Grothendieck.
\newblock {\'El\'ements de g\'eom\'etrie alg\'ebrique (r\'edig\'e avec la
  cooperation de Jean Dieudonn\'e): IV. \'Etude locale des sch\'emas et des
  morphismes des sch\'emas, Premier partie}.
\newblock {\em {Publ. Math. IHES}}, ({20}):{5--259}, 1964.

\bibitem{EGAIV2}
Alexander Grothendieck.
\newblock {\'El\'ements de g\'eom\'etrie alg\'ebrique (r\'edig\'e avec la
  cooperation de Jean Dieudonn\'e): IV. \'Etude locale des sch\'emas et des
  morphismes des sch\'emas, Seconde partie}.
\newblock {\em {Publ. Math. IHES}}, ({24}):{5--231}, 1966.

\bibitem{SGA3}
Alexander Grothendieck.
\newblock {{S\'eminaire de G\'eom\'etrie Alg\'ebrique 3}}.
\newblock {\em Springer}, 1970.

\bibitem{kollar}
J\'anos Koll\'ar.
\newblock {Rationally connected varieties and fundamental groups.}
\newblock {In {\em Higher dimensional varieties and rational points}}, p. 69--92. Springer, 2003.

\bibitem{milne}
James Milne.
\newblock {\em Basic Theory of Affine Group Schemes.}
\newblock {Available at {\sc www.jmilne.org}}, 2012

\bibitem{NSW}
J\"urgen Neukirch, Alexander Schmidt and Kay Wingberg.
\newblock {\em Cohomology of Number Fields}.
\newblock Springer, 2000.

\bibitem{Rosenlicht}
Maxwell Rosenlicht.
\newblock Questions of rationality for solvable algebraic groups over nonperfect fields.
\newblock {\em  Annali di matematica pura ed applicata}, 62:97-120, 1963.

\bibitem{SerreGalois}
Jean-Pierre Serre.
\newblock {\em Galois Cohomology}.
\newblock Second edition. Springer, 2002.

\bibitem{Serre}
Jean-Pierre Serre.
\newblock {\em Topics in Galois theory}.
\newblock Second edition. Springer, 2008.

\end{thebibliography}
\end{document}